\numberwithin{equation}{section}
\newtheorem{theorem}{Theorem}[section]
\newtheorem{proposition}[theorem]{Proposition}
\newtheorem{lemma}[theorem]{Lemma}
\theoremstyle{definition}
\newtheorem{remark}[theorem]{Remark}
\begin{document}

\baselineskip=15pt

\title[Vector bundles on curves defined over number fields]{A Belyi-type criterion
for vector bundles on curves defined over a number field}

\author[I. Biswas]{Indranil Biswas}

\address{Department of Mathematics, Shiv Nadar University, NH91, Tehsil Dadri,
Greater Noida, Uttar Pradesh 201314, India}

\email{indranil.biswas@snu.edu.in, indranil29@gmail.com}

\author[S. Gurjar]{Sudarshan Gurjar}

\address{Department of Mathematics, Indian Institute of Technology Bombay,
Powai, Mumbai 400076, Maharashtra, India}

\email{sgurjar@math.iitb.ac.in}

\subjclass[2010]{14H25, 14H60}

\keywords{Parabolic structure, curve over number field, Krull-Schmidt theorem}

\date{}

\begin{abstract}
Let $X_0$ be an irreducible smooth projective curve defined over $\overline{\mathbb Q}$ and
$f_0\, :\, X_0\, \, \longrightarrow\, \mathbb{P}^1_{\overline{\mathbb Q}}$ 
a nonconstant morphism whose branch locus is contained in the subset
$\{0,\,1,\, \infty\} \, \subset\, \mathbb{P}^1_{\overline{\mathbb Q}}$. For any vector
bundle $E$ on $X\,=\, X_0\times_{{\rm Spec}\,\overline{\mathbb Q}} {\rm Spec}\,\mathbb{C}$, consider the
direct image $f_*E$ on $\mathbb{P}^1_{\mathbb C}$, where $f\,=\, (f_0)_{\mathbb C}$. It decomposes into
a direct sum of line bundles and also it has a natural parabolic structure. We prove that $E$ is the
base change, to $\mathbb C$, of a vector bundle on $X_0$ if and only if there is an isomorphism
$f_*E \, \stackrel{\sim}{\longrightarrow}\, \bigoplus_{i=1}^r {\mathcal O}_{{\mathbb P}^1_{\mathbb C}}(m_i)$,
where $r\,=\, {\rm rank}(f_*E)$, that takes the parabolic structure on $f_*E$ to a parabolic structure on
$\bigoplus_{i=1}^r {\mathcal O}_{{\mathbb P}^1_{\mathbb C}}(m_i)$ defined over $\overline{\mathbb Q}$.
\end{abstract}

\maketitle

\section{Introduction}

A well-known theorem of Gennadii V. Belyi says that an irreducible smooth complex projective curve $X$ is isomorphic
to one defined over $\overline{\mathbb Q}$ if and only if $X$ admits a nonconstant morphism to $\mathbb{P}^1_{\mathbb C}$
whose branch locus is contained in the subset $\{0,\,1,\, \infty\} \, \subset\, \mathbb{P}^1_{\mathbb C}$. It can
be deduced from a work of Weil, \cite{We}, that if $X$ admits a nonconstant morphism $f$ to $\mathbb{P}^1_{\mathbb C}$
whose branch locus is contained in $\{0,\,1,\, \infty\}$, then $X$ is isomorphic to a curve defined over $\overline{\mathbb Q}$
such that $f$ is also defined over $\overline{\mathbb Q}$ (see also \cite{Go1}). But the converse, namely 
$X$ admits a nonconstant morphism to $\mathbb{P}^1_{\mathbb C}$,
whose branch locus is contained in $\{0,\,1,\, \infty\}$, if $X$ is isomorphic to a curve
defined over $\overline{\mathbb Q}$, involves a very ingenious construction
of Belyi. See \cite{Go2} for a result along this line for complex surfaces.
See \cite{Gr2}, \cite{SL} for a program inspired by the work of Belyi.

Let $X$ be an irreducible smooth complex projective curve which is isomorphic to one defined over $\overline{\mathbb Q}$.
Our aim here is to address the following question:

Given a vector bundle on $X$, when is it isomorphic to one defined over $\overline{\mathbb Q}$? To formulate this question more
precisely, let $X_0$ be an irreducible smooth projective curve defined over $\overline{\mathbb Q}$. Let
$E$ be a vector bundle on the complex projective curve $X\,=\, (X_0)_{\mathbb C}\, :=\,
X_0\times_{{\rm Spec}\,\overline{\mathbb Q}} {\rm Spec}\,\mathbb{C}$. The question is to decide whether $E$ is isomorphic
to the base change, to $\mathbb C$, of a vector bundle over $X_0$.

Using Belyi's criterion, fix a nonconstant morphism
$$
f_0\, :\, X_0\, \, \longrightarrow\, \mathbb{P}^1_{\overline{\mathbb Q}}
$$
whose Branch locus is contained in $\{0,\,1,\, \infty\} \, \subset\, \mathbb{P}^1_{\overline{\mathbb Q}}$. Let
$$
f\,=\, (f_0)_{\mathbb C} \,:\, X\,=\, (X_0)_{\mathbb C}\, \longrightarrow\, \mathbb{P}^1_{\mathbb C}
$$
be the base change of $f_0$. The direct image $f_*E\, \longrightarrow\, \mathbb{P}^1_{\mathbb C}$ splits into a direct
sum of line bundle \cite{Gr1}. Consequently, $f_*E$ is isomorphic to the base change, to $\mathbb C$, of a vector bundle on
$\mathbb{P}^1_{\overline{\mathbb Q}}$. Let $\bigoplus_{i=1}^r {\mathcal O}_{{\mathbb P}^1_{\overline{\mathbb Q}}}(m_i)$
be the vector bundle on $\mathbb{P}^1_{\overline{\mathbb Q}}$ admitting an isomorphism
\begin{equation}\label{e0}
\Psi\, :\, f_*E \, \longrightarrow\, \left(\bigoplus_{i=1}^r {\mathcal O}_{{\mathbb P}^1_{\overline{\mathbb Q}}}(m_i)\right)
\otimes_{\overline{\mathbb Q}}{\mathbb C}\,=\, \bigoplus_{i=1}^r {\mathcal O}_{{\mathbb P}^1_{\mathbb C}}(m_i),
\end{equation}
where $r\,=\, {\rm rank}(f_*E)$.

The direct image $f_*E$ has a natural parabolic structure; parabolic vector bundles were introduced in \cite{MS} (their
definition is recalled in Section \ref{se2.2}). Using the isomorphism $\Psi$ in \eqref{e0}, the parabolic structure on $f_*E$
produces a parabolic structure on $\bigoplus_{i=1}^r {\mathcal O}_{{\mathbb P}^1_{\mathbb C}}(m_i)$.
We prove the following (see Proposition \ref{prop1} and Theorem \ref{thm1}):

\begin{theorem}\label{thm0}
A vector bundle $E\, \longrightarrow\, X$ is isomorphic to the base change, to $\mathbb C$, of a vector bundle over
$X_0$ if and only if there is an isomorphism $\Psi$ as in \eqref{e0} such that the corresponding parabolic structure
on $\bigoplus_{i=1}^r {\mathcal O}_{{\mathbb P}^1_{\mathbb C}}(m_i)$ is defined over $\overline{\mathbb Q}$.
\end{theorem}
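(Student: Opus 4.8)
The plan is to recast the descent question through the parabolic direct image functor $E\longmapsto f_*E$ of Section~\ref{se2.2}, which sends a vector bundle on $X$ to the parabolic bundle $f_*E$ on $\mathbb{P}^1_{\mathbb{C}}$, and to exploit two of its features: it is fully faithful, and it is compatible with the base change $\overline{\mathbb{Q}}\hookrightarrow\mathbb{C}$. The compatibility holds because $f$ is finite flat, so $f_*$ commutes with base change, while the flags and weights at $\{0,1,\infty\}$ are produced from the ramification data of $f_0$ by an algebraic construction; consequently, for every vector bundle $E_0$ on $X_0$ the parabolic bundle $f_*(E_0)_{\mathbb{C}}$ is the base change of the parabolic bundle $f_{0*}E_0$ on $\mathbb{P}^1_{\overline{\mathbb{Q}}}$. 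I would record this base change compatibility as a preliminary lemma.

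For the forward implication, assume $E\cong (E_0)_{\mathbb{C}}$. Then $f_*E$ is, as a parabolic bundle, the base change of $f_{0*}E_0$, whose parabolic structure is defined over $\overline{\mathbb{Q}}$. Grothendieck's theorem over the field $\overline{\mathbb{Q}}$ splits $f_{0*}E_0$ into a sum of line bundles, and since the cohomological data fixing the splitting type are unchanged by base change, the Krull--Schmidt theorem forces this splitting type to be exactly the multiset $\{m_i\}$ of \eqref{e0}. Choosing a $\overline{\mathbb{Q}}$-isomorphism $\Phi_0\colon f_{0*}E_0\to\bigoplus_{i=1}^r{\mathcal O}_{\mathbb{P}^1_{\overline{\mathbb{Q}}}}(m_i)$ and setting $\Psi=(\Phi_0)_{\mathbb{C}}$, the parabolic structure transported by $\Psi$ is, by construction, the base change of the parabolic structure that $\Phi_0$ produces over $\overline{\mathbb{Q}}$, hence defined over $\overline{\mathbb{Q}}$.

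For the reverse implication, suppose some $\Psi$ makes the parabolic structure on $\bigoplus_{i=1}^r{\mathcal O}_{\mathbb{P}^1_{\mathbb{C}}}(m_i)$ defined over $\overline{\mathbb{Q}}$, meaning its flags at $0,1,\infty$ are base changes of $\overline{\mathbb{Q}}$-subspaces and its weights are rational. These data assemble into a parabolic bundle $V_0$ on $\mathbb{P}^1_{\overline{\mathbb{Q}}}$ whose base change is identified with $f_*E$ through $\Psi$. Applying the inverse of the parabolic direct image over $\overline{\mathbb{Q}}$ --- the pullback-and-modification along the ramification divisor, which is again algebraic --- produces a vector bundle $E_0$ on $X_0$ with $f_{0*}E_0\cong V_0$ as parabolic bundles. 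Base changing and composing with $\Psi$ gives an isomorphism of parabolic bundles $f_*(E_0)_{\mathbb{C}}\cong f_*E$, and full faithfulness of the functor over $\mathbb{C}$ then yields $E\cong (E_0)_{\mathbb{C}}$.

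The main obstacle is to justify that the existence of a \emph{single} good $\Psi$ is the right criterion. The isomorphisms $\Psi$ of \eqref{e0} form a torsor under $\mathrm{Aut}\bigl(\bigoplus_{i=1}^r{\mathcal O}_{\mathbb{P}^1_{\mathbb{C}}}(m_i)\bigr)$, and the transported flags move across this torsor; the point is that the underlying split bundle descends unconditionally, its splitting type being a discrete invariant by Krull--Schmidt, so the entire obstruction to descending $f_*E$ resides in the flags and disappears precisely when the orbit of the parabolic structure meets a $\overline{\mathbb{Q}}$-rational point. The delicate verification is that $V_0$ genuinely lies in the essential image of the parabolic direct image over $\overline{\mathbb{Q}}$, i.e.\ that the conditions on weights and flags defining that image are stable under base change; this is where the explicit description in Section~\ref{se2.2} is essential, and it is the step I expect to require the most care.
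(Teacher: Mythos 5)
Your forward implication is sound and is essentially the paper's Proposition \ref{prop1}: the subsheaves defining the quasiparabolic filtration commute with the base change $\overline{\mathbb Q}\hookrightarrow\mathbb C$, and Krull--Schmidt over $\mathbb{P}^1$ fixes the splitting type. The reverse implication, however, rests on two claims that are not established and, as stated, are false. First, the parabolic direct image functor $E\longmapsto (f_*E)_*$ is \emph{not} fully faithful. Take $E\,=\,{\mathcal O}_X$: the inclusion ${\mathcal O}_{{\mathbb P}^1_{\mathbb C}}\,\hookrightarrow\, f_*{\mathcal O}_X$ and the trace map $f_*{\mathcal O}_X\,\longrightarrow\,{\mathcal O}_{{\mathbb P}^1_{\mathbb C}}$ are both parabolic morphisms (the trace kills every positive-weight subspace, because a local function vanishing on $f^{-1}(y)$ has trace vanishing at $y$), and their composition is $\deg(f)\cdot\mathrm{id}$. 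Hence ${\mathcal O}_{{\mathbb P}^1_{\mathbb C}}$ with its trivial parabolic structure is a parabolic direct summand of $(f_*{\mathcal O}_X)_*$, so the parabolic endomorphism algebra of $(f_*{\mathcal O}_X)_*$ has dimension at least $2$, while ${\rm End}({\mathcal O}_X)\,=\,\mathbb C$. Consequently your final step, deducing $E\,\cong\,(E_0)_{\mathbb C}$ from an isomorphism $f_*(E_0)_{\mathbb C}\,\cong\, f_*E$ of parabolic bundles, is unsupported; even the weaker statement you actually need --- injectivity of the parabolic direct image on isomorphism classes --- is a nontrivial assertion requiring proof (via the Galois closure, $f_*$ becomes an induction functor ${\rm Ind}_G^{\Gamma}$, and induction is in general not injective on isomorphism classes).

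Second, and more seriously, the step producing $E_0$ does not exist as described. You invoke ``the inverse of the parabolic direct image over $\overline{\mathbb Q}$ --- the pullback-and-modification along the ramification divisor''. But $f_0$ is not Galois, and no such inverse exists: parabolic pullback along $f_0$ multiplies rank by $\deg(f_0)$, so no pullback-type operation can invert $f_{0*}$. The genuine inverse-type (orbifold/parabolic) correspondence applies to a \emph{Galois} covering and to the \emph{invariant} direct image, not to the full direct image along $f$. This is exactly why the paper's proof of Theorem \ref{thm1} passes to the Galois closure $\varphi_0\,:\,Y_0\,\longrightarrow\,\mathbb{P}^1_{\overline{\mathbb Q}}$: there, \eqref{e18} and \eqref{e20} identify $\varphi^*W_*$ with the $G$-invariants of $\bigoplus_{g\in\Gamma}g^*\gamma^*E$, exhibiting $\gamma^*E$ as a direct summand of a vector bundle defined over $\overline{\mathbb Q}$; the Krull--Schmidt descent lemma (Lemma \ref{lem-f}, resting on Atiyah's results) then descends $\gamma^*E$ to $\overline{\mathbb Q}$, and finally $E$ itself, being a direct summand of $\gamma_*\gamma^*E\,=\,E\otimes\gamma_*{\mathcal O}_Y$, is descended by a second application of the same lemma. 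Your proposal hides precisely this content in the unproved assertion that the essential image of $f_{0*}$ is ``stable under base change'': characterizing that essential image and proving such stability is not easier than the theorem --- indeed, the claim that the $\overline{\mathbb Q}$-form $V_0$ of $(f_*E)_*$ lies in the essential image of $f_{0*}$ is essentially a restatement of the conclusion to be proved. Without an independent argument here (Galois closure plus Krull--Schmidt descent, or some substitute), the reverse implication is not proved.
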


\section{Direct image and parabolic structure}

\subsection{Direct image on the projective line}\label{se2.1}

Let $X_0$ be an irreducible smooth projective curve defined over $\overline{\mathbb Q}$. Let
\begin{equation}\label{e1a}
f_0\, :\, X_0\, \, \longrightarrow\, \mathbb{P}^1_{\overline{\mathbb Q}}
\end{equation}
be a nonconstant morphism which is unramified over the complement
$\mathbb{P}^1_{\overline{\mathbb Q}}\setminus \{0,\,1,\, \infty\}$. In other
words, the branch locus of $f$ is contained in the subset $\{0,\,1,\, \infty\}\,
\subset\, \mathbb{P}^1_{\overline{\mathbb Q}}$.

Let
$$
X\,=\, (X_0)_{\mathbb C}\, =\, X_0\times_{{\rm Spec}\,\overline{\mathbb Q}} {\rm Spec}\,\mathbb{C}
$$
be the base change of $X_0$ to $\mathbb C$. Let
\begin{equation}\label{e2b}
f\,\,:=\,\, (f_0)_{\mathbb C}\,\, :\,\,
X\,\, \longrightarrow\, \,\mathbb{P}^1_{\overline{\mathbb Q}}\times_{{\rm Spec}\,\overline{\mathbb Q}}
{\rm Spec}\,{\mathbb C}\,\,=\,\, \mathbb{P}^1_{\mathbb C}
\end{equation}
be the base change, to $\mathbb C$, of the map $f_0$ in \eqref{e1a}. So $f$ is unramified
over the complement $\mathbb{P}^1_{\mathbb C}\setminus \{0,\,1,\, \infty\}$.

Let $E$ be a vector bundle over the smooth complex projective curve $X$. Consider
the direct image
\begin{equation}\label{e3}
W\, \,:=\, f_*E\,\, \longrightarrow\,\, \mathbb{P}^1_{\mathbb C}
\end{equation}
under the map $f$ in \eqref{e2b}. This vector bundle $W$ on $\mathbb{P}^1_{\mathbb C}$ has a natural
parabolic structure over $\{0,\,1,\, \infty\}$. Parabolic vector bundles were introduced in \cite{MS};
a natural parabolic structure on a direct image was constructed in \cite[Section~4]{AB}. We briefly recall the
definition of a parabolic bundle and also the construction of a parabolic structure on a direct image.

\subsection{Parabolic bundles and direct image}\label{se2.2}

Let $X$ be any compact connected Riemann surface. Let
$$
D\,\,:=\,\, \{x_1,\, \cdots,\, x_\ell\}\,\, \subset\,\, X
$$
be a finite subset.
Take a holomorphic vector bundle $E$ on $X$. A \textit{quasi-parabolic structure} on $E$ is a
strictly decreasing filtration of subspaces
\begin{equation}\label{e1}
E_{x_i}\,=\, E^1_i\, \supsetneq\, E^2_i \,\supsetneq\, \cdots\, \supsetneq\,
E^{n_i}_i \, \supsetneq\, E^{n_i+1}_i \,=\, 0
\end{equation}
for every $1\, \leq\, i\, \leq\, \ell$; here $E_{x_i}$ denotes the fiber
of $E$ over the point $x_i\,\in\, D$. A \textit{parabolic structure} on $E$ is a
quasi-parabolic structure as above together with $\ell$ increasing sequences of rational numbers
\begin{equation}\label{e2}
0\, \leq\, \alpha_{i,1}\, <\, \alpha_{i,2}\, <\,
\cdots\, < \, \alpha_{i,n_i}\, < 1\, , \ \ 1\, \leq\, i\, \leq\, \ell \, ;
\end{equation}
the rational number $\alpha_{i,j}$ is called the \textit{parabolic weight} of the subspace $E^j_i$ in
the quasi-parabolic filtration in \eqref{e1}. The \textit{multiplicity} of a parabolic weight $\alpha_{i,j}$ at
$x_i$ is defined to be the dimension of the complex vector space $E^j_i/E^{j+1}_i$.
A parabolic vector bundle is a holomorphic vector bundle equipped with a parabolic structure.
The subset $D$ is called the \textit{parabolic divisor}. (See \cite{MS}, \cite{MY}.)

Let $X$ and $Y$ be compact connected Riemann surfaces and
\begin{equation}\label{ep}
\phi\,\,:\,\, X \,\, \longrightarrow\,\, Y
\end{equation}
a nonconstant holomorphic map. Let
\begin{equation}\label{e8}
R\, \subset\, X
\end{equation}
be the ramification locus of $\phi$. For any point $x\,\in\, X$, let $m_x\, \geq\, 1$ be the multiplicity
of $\phi$ at $x$, so $m_x\, \geq\, 2$ if and only if $x\,\in\, R$. Let
\begin{equation}\label{e9}
\Delta\,=\, \phi (R) \, \subset\, Y.
\end{equation}
Let $E$ be a holomorphic vector bundle on $X$. We will construct a parabolic structure on the direct image
$\phi_*E$ whose parabolic divisor is the finite subset $\Delta$ defined in \eqref{e9}.

We recall a general property of a direct image. For any point $y\, \in\, Y$, the fiber $(\phi_* E)_y$
of $\phi_* E$ over $y$ has a certain canonical decomposition
\begin{equation}\label{e11}
(\phi_* E)_y\,=\, \bigoplus_{x\,\in \,\phi^{-1}(y)} V_x
\end{equation}
such that $\dim V_x\,=\, m_x\cdot \text{rank}(E)$, where $m_x$ is the multiplicity of $\phi$ at $x$ (see \cite[p.~19562, (4.4)]{AB}).
To describe the subspace $V_x\, \subset\, (\phi_* E)_y$ in \eqref{e11}, consider the homomorphism
\begin{equation}\label{h1}
\phi_*\left(E\otimes\left(\bigotimes_{z \in \phi^{-1}(y)\setminus x} {\mathcal O}_X(-m_z z)\right)\right)\,\, \longrightarrow\,\, \phi_*E
\end{equation}
given by the natural inclusion of $E\otimes \left(\bigotimes_{z \in \phi^{-1}(y)\setminus x} {\mathcal O}_X(-m_z z)\right)$
in $E$. The subspace $V_x\,\subset\, (\phi_* E)_y$ is the image of the homomorphism of fibers
$$
\left(\phi_*\left(E\otimes\left(\bigotimes_{z \in \phi^{-1}(y)\setminus x} {\mathcal O}_X(-m_z z)\right)\right)\right)_y
\, \longrightarrow\, (\phi_*E)_y
$$
corresponding to the homomorphism of coherent analytic sheaves in \eqref{h1}.

We will also recall an explicit description of the subspace $V_x\, \subset\, (\phi_* E)_y$. Take any
small open disk $x\, \in\, U\, \subset\, X$ around $x$ such that
\begin{itemize}
\item $U\bigcap \phi^{-1}(y)\,=\, \{x\}$,

\item $U\bigcap R\, \subset\, \{x\}$, and

\item $\# \phi^{-1}(y')\bigcap U\,=\, m_x$ for all $y'\, \in\, \phi(U)\setminus \{y\}$.
\end{itemize}
Let
\begin{equation}\label{e12}
\psi\, :=\, \phi\big\vert_U \, :\, U \, \longrightarrow\, \phi(U)
\end{equation}
be the restriction of $\phi$ to $U$. There is natural a homomorphism
\begin{equation}\label{uh}
\rho^x\, :\, \psi_* (E\big\vert_U)\, \longrightarrow\, (\phi_* E)\big\vert_{\psi(U)}
\end{equation}
arising from the commutative diagram of maps
$$
\begin{matrix}
U & \hookrightarrow & X\\
\,\,\, \Big\downarrow \psi && \,\,\, \Big\downarrow \phi\\
\phi(U) & \hookrightarrow & Y
\end{matrix}
$$
Let
\begin{equation}\label{uh2}
\rho^x_y\,\,:\,\, (\psi_* (E\big\vert_U))_y \,\, \longrightarrow\,\, (\phi_* E)_y
\end{equation}
be the homomorphism of fibers over $y$ corresponding to the homomorphism of coherent analytic sheaves in
\eqref{uh}. The restriction of $\phi_* E$ to a sufficiently small open neighborhood of $y\,\in\,
Y$ is the direct sum $\bigoplus_{x\in\phi^{-1}(y)} {\rm image}(\rho^x)$ (see \eqref{uh}). From this
it follows immediately that $\rho^x_y$ in \eqref{uh2} is fiberwise injective.
The subspace $\rho^x_y((\psi_* (E\big\vert_U))_y)\, \subset\, (\phi_* E)_y$ coincides with
$V_x$. Now we have the decomposition in \eqref{e11}.

The parabolic structure on $(\phi_* E)_y$ will be described by giving a parabolic structure on each
direct summand $V_x$ and then taking their direct sum. To give a parabolic structure on $V_x$, first
note that for any $j\, \geq\, 0$, there is a natural injective homomorphism of coherent analytic sheaves
\begin{equation}\label{h-1}
\phi_*\left(E\otimes {\mathcal O}_X(-jx) \otimes\left(\bigotimes_{z \in 
\phi^{-1}(y)\setminus x} {\mathcal O}_X(-m_z z) \right)\right)\,\, \longrightarrow\,\, \phi_*E
\end{equation} (see \eqref{h1}).
The image of the fiber $\phi_*\left(E\otimes {\mathcal O}_X(-jx) \otimes\left(\bigotimes_{z \in
\phi^{-1}(y)\setminus x} {\mathcal O}_X(-m_z z) 
\right)\right)_y$ in $(\phi_* E)_y$ by the homomorphism in \eqref{h-1} will be denoted by $\mathbf{E}(x,j)$.
We have a filtration of subspaces of $V_x$:
\begin{equation}\label{e13}
V_x\,:=\, \mathbf{E}(x,0) \, \supset \, \mathbf{E}(x,1) \, \supset \, \mathbf{E}(x,2) \, 
\supset\, \cdots\, \supset\,\mathbf{E}(x, m_x-1)\, \supset\, \mathbf{E}(x, m_x) \,=\,0.
\end{equation}
Note that 
$$
\phi_*\left(E\otimes\left(\bigotimes_{z \in \phi^{-1}(y)} {\mathcal O}_X(-m_z z) \right)\right)\,\,=\,\,
(\phi_*E)\otimes {\mathcal O}_Y(-y)
$$
by the projection formula, and hence we have $\mathbf{E}(x,m_x) \,=\,0$. The parabolic weight of the subspace 
\begin{equation}\label{a1}
\mathbf{E}(x,k)\, \subset\, V_x
\end{equation}
in \eqref{e13} is $\frac{k}{m_x}$.

This way we have a parabolic structure on the vector space $V_x$. Now taking the direct sum of these parabolic 
structures we get a parabolic structure on $E_y$ using \eqref{e11}.

\subsection{A property of the parabolic structure}\label{se2.3}

We return to the set-up of Section \ref{se2.1}. For any $m\, \in\, \mathbb Z$, the line bundle on ${\mathbb 
P}^1_{\mathbb C}$ of degree $m$ will be denoted by ${\mathcal O}_{{\mathbb P}^1_{\mathbb C}}(m)$. Any vector 
bundle over ${\mathbb P}^1_{\mathbb C}$ of rank $r$ decomposes into a direct sum of the form $\bigoplus_{i=1}^r
{\mathcal O}_{{\mathbb P}^1_{\mathbb C}}(m_i)$ \cite[p.~122, Th\'eor\`eme 1.1]{Gr1}. Therefore, every vector bundle
over ${\mathbb P}^1_{\mathbb C}$ is isomorphic to the base change, to $\mathbb C$, of a vector bundle defined over
$\mathbb{P}^1_{\overline{\mathbb Q}}$.

Let $E_0$ be a vector bundle over $X_0$. Consider the direct image
\begin{equation}\label{w0}
W_0\,\, :=\,\, (f_0)_*E_0\,\, \longrightarrow\,\,\mathbb{P}^1_{\overline{\mathbb Q}},
\end{equation}
where $f_0$ is the map in \eqref{e1a}. Set $E$ in \eqref{e3} to be the vector bundle
\begin{equation}\label{a2}
E\,:=\, E_0\otimes_{\overline{\mathbb Q}} \mathbb{C}\, \longrightarrow\, X\,=\,
X_0\times_{{\rm Spec}\,\overline{\mathbb Q}} {\rm Spec}\,\mathbb{C}
\end{equation}
obtained by base change of $E_0$ to $\mathbb C$. Therefore, the direct image
$W\,=\,f_*E$ (as in \eqref{e3}) of $E$ in \eqref{a2} is the base change
\begin{equation}\label{f1}
W\,\,=\,\, f_*E \,\,=\,\, ((f_0)_*E_0)\otimes_{\overline{\mathbb Q}} \mathbb{C}
\,\,=\,\, W_0\otimes_{\overline{\mathbb Q}} \mathbb{C}
\end{equation}
of $W_0$ (see \eqref{w0}) to $\mathbb C$.

\begin{proposition}\label{prop1}
The parabolic structure on the direct image $W$ in \eqref{f1} is defined over $\overline{\mathbb Q}$;
in other words, this parabolic structure is given by a parabolic structure on $W_0$ (defined in \eqref{w0}).
\end{proposition}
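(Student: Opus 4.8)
The plan is to exploit the fact that, although the parabolic structure of Section \ref{se2.2} was phrased in terms of analytic open disks, both the quasi-parabolic filtration and its weights admit a purely algebraic description via \eqref{h1} and \eqref{h-1}. Such a description can be carried out verbatim over $\overline{\mathbb Q}$ on the model $W_0$ of \eqref{w0}, and the task reduces to checking that it commutes with the base change $\otimes_{\overline{\mathbb Q}}\mathbb C$ of \eqref{f1}.

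First I would record the relevant rationality. The parabolic divisor $\{0,\,1,\,\infty\}$ consists of $\mathbb Q$-rational points of $\mathbb P^1_{\overline{\mathbb Q}}$, and since $\overline{\mathbb Q}$ is algebraically closed, each fiber $f_0^{-1}(y_0)$ with $y_0\in\{0,\,1,\,\infty\}$ is a finite set of $\overline{\mathbb Q}$-points of $X_0$. For such a point $x_0$, its base change to $\mathbb C$ is the corresponding point $x$ of the complex fiber $f^{-1}(y)$, and the ramification multiplicities agree, $m_x\,=\,m_{x_0}$, since ramification indices are invariant under flat base change. Hence the effective divisors $\sum_z m_z z$ occurring in \eqref{h1} and \eqref{h-1} are the base changes of their analogues $\sum_{z_0} m_{z_0} z_0$ on $X_0$, and the twisted bundles $E\otimes\mathcal O_X(-kx)\otimes(\cdots)$ in \eqref{h-1} are the base changes, to $\mathbb C$, of their evident counterparts over $X_0$.

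Next I would reconstruct the filtration over $\overline{\mathbb Q}$ and verify base-change compatibility. For each $y_0\in\{0,\,1,\,\infty\}$ and $x_0\in f_0^{-1}(y_0)$, the natural inclusion of the twisted bundle into $E_0$, the finite direct image $(f_0)_*$, and the induced map of fibers over $y_0$ define, exactly as in \eqref{h1} and \eqref{h-1}, a subspace $V_{x_0}\subset(W_0)_{y_0}$ and a filtration $\mathbf{E}_0(x_0,k)\subset V_{x_0}$, to which I attach the weight $k/m_{x_0}$. The decisive point is that $f_0$, being a nonconstant morphism of smooth projective curves, is finite, so $(f_0)_*$ is exact and commutes with the flat base change $\mathrm{Spec}\,\mathbb C\to\mathrm{Spec}\,\overline{\mathbb Q}$; moreover taking the fiber of a coherent sheaf at a point and taking the image of a linear map both commute with $\otimes_{\overline{\mathbb Q}}\mathbb C$, this functor being exact. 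Assembling these facts, the fiber homomorphism in \eqref{h-1} is the base change of its $\overline{\mathbb Q}$-analogue over $y_0$, and because the image of a linear map base-changes to the image of the base-changed map, one obtains $V_{x_0}\otimes_{\overline{\mathbb Q}}\mathbb C\,=\,V_x$ and $\mathbf{E}_0(x_0,k)\otimes_{\overline{\mathbb Q}}\mathbb C\,=\,\mathbf{E}(x,k)$ as subspaces of $(W_0)_{y_0}\otimes_{\overline{\mathbb Q}}\mathbb C\,=\,W_y$.

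Finally I would conclude. The weights $k/m_{x_0}$ are rational and equal the complex weights $k/m_x$, while the decomposition \eqref{e11} and the strictness of the filtration \eqref{e13} over $\overline{\mathbb Q}$ may be checked after base change, where they are already supplied by Section \ref{se2.2}: by faithful flatness of $\overline{\mathbb Q}\hookrightarrow\mathbb C$, an inclusion that becomes strict, or a map that becomes an isomorphism, after $\otimes_{\overline{\mathbb Q}}\mathbb C$ was already so beforehand. Thus the data $(V_{x_0},\,\mathbf{E}_0(x_0,k),\,k/m_{x_0})$ constitute a genuine parabolic structure on $W_0$ whose base change is precisely the parabolic structure on $W$ of Section \ref{se2.2}, which is the claim. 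The main obstacle here is conceptual rather than computational: one must recognize that the analytic open-disk description in Section \ref{se2.2} only serves to identify the summands $V_x$ in \eqref{e11}, whereas the subspaces $V_x$ and $\mathbf{E}(x,k)$ are themselves intrinsically algebraic, and then confirm the base-change compatibility, for which the finiteness of $f_0$ is the essential input.
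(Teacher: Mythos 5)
Your proof is correct and takes essentially the same approach as the paper's: both rest on the observation that the ramification points and multiplicities are defined over $\overline{\mathbb Q}$ and that the twisted direct images in \eqref{h-1}, which define the subspaces $V_x$ and $\mathbf{E}(x,k)$, commute with the flat base change $\overline{\mathbb Q}\hookrightarrow\mathbb C$ because $f_0$ is finite. You merely spell out the fiber, image, and strictness compatibilities that the paper's proof compresses into the single displayed equality and then declares evident.
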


\begin{proof}
Note that all the ramification points in $X$ for the map $f$ are defined over $\overline{\mathbb Q}$.
Since $E$ is the base change to $\mathbb C$ of $E_0$, for any $x\, \in\, f^{-1}(\{0,\,1,\, \infty\})$
and any $j\, \geq\, 0$, the vector bundle 
$$
f_*\left(E\otimes {\mathcal O}_X(-jx) \otimes\left(\bigotimes_{z \in 
f^{-1}_0(f_0(x)) \setminus x} {\mathcal O}_X(-m_z z) \right)\right)
$$
(see \eqref{h-1}) satisfies the following condition:
$$
f_*\left(E\otimes {\mathcal O}_X(-jx) \otimes\left(\bigotimes_{z \in 
f^{-1}_0(f_0(x)) \setminus x} {\mathcal O}_X(-m_z z) \right)\right)\,\,=
$$
$$
(f_0)_*\left(E_0\otimes {\mathcal O}_{X_0}(-jx) \otimes\left(\bigotimes_{z \in 
f^{-1}_0(f_0(x)) \setminus x} {\mathcal O}_{X_0}(-m_z z) \right)\right)\otimes_{\overline{\mathbb Q}} \mathbb{C},
$$
where $E_0$ and $f_0$ are as in \eqref{w0}.
In view of this, the proposition is evident from the construction of the parabolic structure on the direct image
$W$ (see Section \ref{se2.2}).
\end{proof}

In the next section we will prove a converse of Proposition \ref{prop1}.

\section{Parabolic structure on a pullback}

\subsection{Parabolic structure defined over $\overline{\mathbb Q}$}

Let
\begin{equation}\label{e3b}
E\,\, \longrightarrow\,\,\, X\,\,=\,\, X_0\times_{{\rm Spec}\,\overline{\mathbb Q}} {\rm Spec}\,\mathbb{C}
\end{equation}
be a holomorphic vector bundle. Consider the parabolic structure on the direct image $W$ defined as in \eqref{e3}. This
parabolic bundle will be denoted by
\begin{equation}\label{e23}
W_*.
\end{equation}
As noted in Section \ref{se2.3}, $W$ is isomorphic to a vector
bundle defined over $\overline{\mathbb Q}$. Let ${\mathcal W}\, \longrightarrow\,
{\mathbb P}^1_{\overline{\mathbb Q}}$ be a vector bundle and
\begin{equation}\label{e14}
\Psi\,\,:\,\, W \, \longrightarrow\, {\mathcal W}\otimes_{\overline{\mathbb Q}}{\mathbb C}
\end{equation}
an isomorphism.

Using $\Psi$ in \eqref{e14}, we will consider $W$ to be the base change, to $\mathbb C$, of
the vector bundle $\mathcal W$ defined over $\overline{\mathbb Q}$.
Since the point $0,\, 1,\, \infty$ are defined over $\overline{\mathbb Q}$,
and $W\,=\, {\mathcal W}\otimes_{\overline{\mathbb Q}}{\mathbb C}$, it makes sense to ask
whether the quasiparabolic filtrations of $W_*$ are given by quasiparabolic filtrations
on $\mathcal W$, or in other words, whether the parabolic structure on $W$ is defined over
$\overline{\mathbb Q}$ (recall that the parabolic weights of $W_*$ are rational numbers).

The following is the main result proved here.

\begin{theorem}\label{thm1}
If the quasiparabolic filtrations of $W_*$ are defined over $\overline{\mathbb Q}$, then $E$ is
isomorphic to the base change, to $\mathbb C$, of a vector bundle on $X_0$ (see \eqref{e1a})
defined over $\overline{\mathbb Q}$.
\end{theorem}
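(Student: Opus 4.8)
The plan is to deduce the theorem from the behaviour of the parabolic direct image under the action of $\mathrm{Aut}(\mathbb C/\overline{\mathbb Q})$ on bundles over $X$, together with a descent argument organized by the Krull--Schmidt theorem. Write $G$ for the group of field automorphisms of $\mathbb C$ fixing $\overline{\mathbb Q}$. Since $X_0$, $\mathbb P^1_{\overline{\mathbb Q}}$, the morphism $f_0$ in \eqref{e1a}, the points $0,1,\infty$, and all the ramification points of $f$ are defined over $\overline{\mathbb Q}$, each $\sigma\in G$ acts (through $\mathrm{id}\times\mathrm{Spec}(\sigma)$) on vector bundles over $X$ and over $\mathbb P^1_{\mathbb C}$, fixing $f$. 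First I would record that the construction of the parabolic direct image in Section~\ref{se2.2} is built entirely out of operations defined over $\overline{\mathbb Q}$ --- the sheaves $\mathcal O_X(-jx)$ at $\overline{\mathbb Q}$-points, the homomorphisms \eqref{h-1}, and the resulting filtrations \eqref{e13} --- so, exactly as in the proof of Proposition~\ref{prop1}, it is $G$-equivariant: for every $\sigma$ there is a canonical identification of $f_*(E^\sigma)$, with its parabolic structure, with the $\sigma$-conjugate of the parabolic bundle $W_*$ in \eqref{e23}.

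Next I would exploit the hypothesis. By assumption the underlying bundle $W$ splits into line bundles, and hence descends to $\mathcal W$ over $\overline{\mathbb Q}$ through $\Psi$ in \eqref{e14}, while the quasiparabolic filtrations are defined over $\overline{\mathbb Q}$; the parabolic weights are rational and so are automatically $\sigma$-fixed. Consequently the $\sigma$-conjugate of $W_*$ is isomorphic to $W_*$ as a parabolic bundle, for every $\sigma\in G$. Combining this with the previous paragraph produces, for each $\sigma$, an isomorphism $f_*(E^\sigma)\cong f_*E$ respecting the parabolic structures.

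The key input I would then invoke is that the parabolic direct image determines the original bundle up to isomorphism: the assignment $E\mapsto W_*$ is faithful in the sense that an isomorphism of parabolic direct images is induced by an isomorphism of the bundles upstairs (this is the reason the parabolic refinement of $f_*$ from \cite{AB} is the correct one, and is where the recovery of $E$ from $W_*$ really resides). Granting this, the isomorphisms of parabolic bundles above yield $E^\sigma\cong E$ for every $\sigma\in G$. It then remains to pass from this $G$-invariance to an actual model of $E$ over $X_0$. Here I would use the Krull--Schmidt theorem: decompose $E$ into indecomposable summands, whose endomorphism rings are local, observe that each $\sigma$ permutes the isomorphism classes of these summands while preserving their multiplicities, and descend. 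Because $\overline{\mathbb Q}$ is algebraically closed, the obstruction to effectivity of the descent datum --- a class built from the scalar automorphisms of an indecomposable summand and living in a Brauer-type group of $\overline{\mathbb Q}$ --- vanishes, so each summand, and hence $E$, is the base change to $\mathbb C$ of a bundle on $X_0$. Finally I would check that the object so produced is genuinely locally free, which follows by faithfully flat descent of local freeness along $\overline{\mathbb Q}\to\mathbb C$.

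I expect two steps to carry the real difficulty. The first is the faithfulness statement, that $W_*$ determines $E$: a parabolic isomorphism is a priori only $\mathcal O_{\mathbb P^1_{\mathbb C}}$-linear, whereas $E$ corresponds to $W$ as a module over the algebra $f_*\mathcal O_X$, so one must show that the filtrations $\mathbf E(x,j)$ of \eqref{e13}, together with the decomposition \eqref{e11}, encode precisely the extra module structure --- a local computation at the ramification points, where $f$ has the form $t\mapsto t^{m_x}$. The second is the effectivity of the descent in the preceding paragraph; the role of the Krull--Schmidt theorem is exactly to reduce this to indecomposable bundles, for which the local endomorphism rings render the descent obstruction manageable over the algebraically closed field $\overline{\mathbb Q}$.
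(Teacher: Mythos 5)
Your argument has a genuine gap at its central step, namely the ``key input'' that the parabolic direct image is faithful on isomorphism classes: that an isomorphism $(f_*E')_*\cong (f_*E)_*$ of parabolic bundles forces $E'\cong E$. This is false in general, and the failure is caused by automorphisms of $X$ over $\mathbb{P}^1_{\mathbb C}$. If $\tau\in \text{Aut}(X/\mathbb{P}^1_{\mathbb C})$ is nontrivial, then $f\circ\tau=f$ gives a canonical isomorphism $f_*(\tau^*E)\cong f_*E$, and this isomorphism respects the parabolic structures of Section \ref{se2.2}: it carries $V_x$ to $V_{\tau(x)}$ and the subspace $\mathbf{E}(x,j)$ of \eqref{e13} for $\tau^*E$ to $\mathbf{E}(\tau(x),j)$ for $E$, with equal weights since $m_x=m_{\tau(x)}$. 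Yet $\tau^*E\not\cong E$ in general. A concrete instance inside the Belyi setting: let $f$ be the cyclic degree-three Belyi map from the elliptic curve $y^3=x(x-1)$ to $\mathbb{P}^1_x$, let $\tau$ generate its Galois group, and let $E$ be a degree-zero line bundle not fixed by $\tau^*$ (all but finitely many are, since $\tau^*$ acts on $\text{Pic}^0$ as an order-three automorphism with finitely many fixed points). In the dictionary of \cite{AB} between parabolic bundles on $\mathbb{P}^1_{\mathbb C}$ and equivariant bundles on the Galois closure --- the dictionary your claim would have to pass through --- $(f_*E)_*$ corresponds to an induced equivariant bundle, and induction does not reflect isomorphism classes (this is standard Mackey-theoretic behaviour). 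So the implication ``$(f_*E^\sigma)_*\cong (f_*E)_* \Rightarrow E^\sigma\cong E$'' cannot be invoked as a general principle; under the hypothesis of Theorem \ref{thm1} it does hold, but only because the theorem itself is true, so your proof is circular exactly at its crux. The paper is structured to avoid this point: it never claims that $W_*$ determines $E$. Instead it pulls back to the Galois closure $\varphi\colon Y\to \mathbb{P}^1_{\mathbb C}$, identifies $\varphi^*W_*$ with the $G$-invariants of $\bigoplus_{g\in\Gamma}g^*\gamma^*E$ as in \eqref{e20}, concludes that $\gamma^*E$ is a \emph{direct summand} of the $\overline{\mathbb Q}$-defined bundle $\varphi^*W_*$, and then descends by Krull--Schmidt (Lemma \ref{lem-f}), applied once on $Y$ and once more on $X$ after pushing forward by $\gamma$ and splitting off $E$ from $E\otimes\gamma_*{\mathcal O}_Y$.

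A secondary, less serious, problem is the descent step itself. Even granting $E^\sigma\cong E$ for every $\sigma\in\text{Aut}({\mathbb C}/\overline{\mathbb Q})$, this is not a situation of classical Galois--Weil descent: ${\mathbb C}/\overline{\mathbb Q}$ is not an algebraic extension, descent data for this group need not be effective in the cocycle sense, and ``vanishing of a Brauer-type obstruction'' is not the relevant mechanism. What one actually does (compare \cite{Go1}) is spread $E$ out over a finitely generated $\overline{\mathbb Q}$-subalgebra $R\subset{\mathbb C}$, use the invariance of the isomorphism class together with constructibility of isomorphism loci in the parameter space to find a dense constructible set of parameters with constant fiber class, and then specialize at a $\overline{\mathbb Q}$-point. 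This can be repaired along those lines; your observation that Krull--Schmidt and the algebraic closedness of $\overline{\mathbb Q}$ must enter is shared with the paper's Lemma \ref{lem-f}, but it does not remove the need for the spreading-out argument, and in any case the faithfulness gap above is the one that sinks the proposed route.
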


\begin{proof}
Let
\begin{equation}\label{e15}
\varphi_0\,\, :\,\, Y_0\,\, \longrightarrow\, \,\mathbb{P}^1_{\overline{\mathbb Q}}
\end{equation}
be the Galois closure of the map $f_0$ in \eqref{e1a}. Let
\begin{equation}\label{e16}
\Gamma\,\, :=\,\, \text{Gal}(\varphi_0)\, \,=\,\, \text{Aut}(Y_0/\mathbb{P}^1_{\overline{\mathbb Q}})
\end{equation}
be the Galois group for the map $\varphi_0$ in \eqref{e15}. Let
\begin{equation}\label{e17}
\gamma_0\,\,:\,\, Y_0\,\, \longrightarrow\,\, X_0
\end{equation}
be the natural map, so we have
\begin{equation}\label{e17a}
f_0\circ \gamma_0\,=\, \varphi_0,
\end{equation}
where $f_0$ is the map in \eqref{e1a}. Let $Y\,=\, Y_0\times_{{\rm Spec}\,\overline{\mathbb Q}}{\rm Spec}\,{\mathbb C}$ be
the base change of $Y_0$ to $\mathbb C$. Let
\begin{equation}\label{e15b}
\varphi\, :\, Y\, \longrightarrow\,\mathbb{P}^1_{\mathbb C}\ \ \text{ and }\ \
\gamma\,:\, Y\, \longrightarrow\, X
\end{equation}
be the base changes, to $\mathbb C$, of $\varphi_0$ and $\gamma_0$ respectively.

Given any nonconstant holomorphic map $\delta\, :\, Z_1\, \longrightarrow\, Z_2$ between compact connected
Riemann surfaces, and a parabolic vector bundle $V_*$ on $Z_2$, we have the pulled back parabolic vector bundle
$\delta^*V_*$ on $Z_1$; see \cite[Section~3]{AB}. The parabolic divisor for $\delta^*V_*$ is the reduced inverse image
$\delta^{-1}(D_V)_{\rm red}$, where $D_V$ is the parabolic divisor for $V_*$. If $Z_1,\, Z_2$ and $\delta$ are defined over
$\overline{\mathbb Q}$, and the parabolic vector bundle $V_*$ is also defined over $\overline{\mathbb Q}$, then
the pulled back parabolic vector bundle $\delta^*V_*$ over $Z_1$ is also defined over $\overline{\mathbb Q}$.
Indeed, this follows immediately from the construction of the parabolic bundle $\delta^*V_*$.

Take any holomorphic vector bundle $V$ on $Y\,=\, Y_0\times_{{\rm Spec}\,\overline{\mathbb Q}}{{\rm Spec}\,\mathbb C}$
(see \eqref{e15}, \eqref{e15b}). The parabolic vector bundle defined by
$\varphi_*V$ (see \eqref{e15b}) equipped with the parabolic structure of a direct image will be denoted by $(\varphi_*V)_*$.
The pulled back parabolic vector bundle $\varphi^*(\varphi_*V)_*$ has the following description:
\begin{equation}\label{e18}
\varphi^*(\varphi_*V)_*\,\,=\,\, \bigoplus_{g\in \Gamma} g^*V,
\end{equation}
where $\Gamma$ is the Galois group in \eqref{e16} (see \cite[p.~19566, Proposition 4.2(2)]{AB}). In particular,
$\varphi^*(\varphi_*V)_*$ has the trivial parabolic structure, in other words, $\varphi^*(\varphi_*V)_*$ has
no nonzero parabolic weights (the underlying vector bundle is the one in the right-hand side of \eqref{e18}).

Assume that the quasiparabolic filtrations of the parabolic bundle $W_*$ in \eqref{e23} are
defined over $\overline{\mathbb Q}$; recall that $W$ is base change, to $\mathbb C$, of $\mathcal W$
using $\Psi$ in \eqref{e14}. So $W_*$ is the base change, to $\mathbb C$, of
a parabolic structure on the vector bundle $\mathcal W$. Consider the pulled back parabolic vector bundle
$\varphi^*W_*$. From the construction of $\varphi^*W_*$ (see \cite[Section 3]{AB}) it follows immediately
that $\varphi^*W_*$ has the trivial parabolic structure (it has no nonzero parabolic weights).

{}From \eqref{e17a} it follows immediately that $f\circ \gamma\,=\, \varphi$ (see \eqref{e15b}). From this
it is deduced that the parabolic vector bundle $\varphi^*W_*$ is a subbundle of the
parabolic vector bundle $\varphi^*(\varphi_*(\gamma^*E)_*)$, where $\gamma$ is the map in \eqref{e15b}
and $E$ is the vector bundle in \eqref{e3b} (see the proof of Proposition 4.3 of \cite[p.~19567]{AB}). From
\eqref{e18} we know that 
$$
\varphi^*(\varphi_*(\gamma^*E)_*)\,\,\simeq\,\, \bigoplus_{g\in \Gamma} g^*\gamma^*E,
$$
and the parabolic structure of $\varphi^*(\varphi_*(\gamma^*E)_*)$ is the trivial one.
So the parabolic structure of the parabolic subbundle $\varphi^*W_*\, \subset\, \varphi^*(\varphi_*(\gamma^*E)_*)$
is also the trivial one; this was already noted above. Consequently, we have
\begin{equation}\label{e18a}
\varphi^*W_*\,\,\, \subset\,\,\, \bigoplus_{g\in \Gamma} g^*\gamma^*E
\end{equation}
is a subbundle.

We will explicitly describe the subbundle in \eqref{e18a}.

Let $G\,:=\, \text{Gal}(\gamma_0)\,=\, \text{Aut}(Y_0/X_0)$ be the Galois group of the map
$\gamma_0$ in \eqref{e17}. So $G$ is a subgroup
of $\Gamma$ in \eqref{e16}, and $X_0\,=\, Y_0/G$. Note that $G$ is a normal subgroup of $\Gamma$ if and
only if the map $f_0$ is (ramified) Galois. We also have $G\,=\, \text{Gal}(\gamma)$ (see \eqref{e15b}).
There is a natural action of $G$ on $\gamma^*E$ over the action of
the Galois action of $G$ on $Y$. Take any $w\, \in\, (\gamma^*E)_y$, $y\,\in\, Y$, and any $h\,\in\, G$.
The point of $(\gamma^*E)_{h(y)}$ to which $w$ is taken by the action of $h$ will be denoted by $h\cdot w$.
The action of $G$ on $\gamma^*E$ (over the action of $G$ on $Y$)
produces an action of $G$ on
\begin{equation}\label{e19}
{\mathcal E}\,\,:=\,\, \bigoplus_{g\in \Gamma} g^*\gamma^*E
\end{equation}
over the trivial action of $G$ on $Y$. We will explicitly describe the action of $G$ on
the vector bundle $\mathcal E$ in \eqref{e19}. Take any point $y\,\in\, Y$. The fiber of
$\mathcal E$ over $y$ is
$$
{\mathcal E}_y\,\, =\,\, \bigoplus_{g\in \Gamma} \gamma^* E_{g(y)}.
$$
Take any element $\bigoplus_{g\in \Gamma} w_g\, \in\, \bigoplus_{g\in \Gamma} \gamma^* E_{g(y)}$, where $w_g\,
\in\, \gamma^* E_{g(y)} \,=\, E_{\gamma(g(y))}$. The action of any $h\, \in\, G$ sends $\bigoplus_{g\in \Gamma} w_g$
to $\bigoplus_{g\in \Gamma} h\cdot w_{h^{-1}g}$. The subbundle in \eqref{e18a} has the following
description:
\begin{equation}\label{e20}
\varphi^*W_*\,\,=\,\, {\mathcal E}^G\,\, \subset\,\, {\mathcal E}
\,\,= \,\, \bigoplus_{g\in \Gamma} g^*\gamma^*E,
\end{equation}
where ${\mathcal E}^G$ is the invariant subbundle (meaning the subbundle fixed pointwise) for the above
action of $G$ on ${\mathcal E}$.

Using \eqref{e20} we will show that $\gamma^*E$ is a direct summand of $\varphi^*W_*$.

Fix a subset ${\mathbb S}\, \subset\, \Gamma$ such that
\begin{itemize}
\item the following composition of maps is a bijection:
\begin{equation}\label{e21}
{\mathbb S}\,\,\hookrightarrow\,\, \Gamma\,\, \longrightarrow\,\, \Gamma/G,
\end{equation}
where $\Gamma\, \longrightarrow\,\Gamma/G$ is the quotient map to the right quotient space $\Gamma/G$ (as
mentioned before, in general $G$ is not a normal subgroup of $\Gamma$), and

\item ${\mathbb S}\cap G \,=\, \{e\}$ (the identity element of $\Gamma$).
\end{itemize}
{} From \eqref{e20} it follows that the subbundle $\varphi^*W_* \,\subset \,
\bigoplus_{g\in \Gamma} g^*\gamma^*E$ is isomorphic to the direct sum $\bigoplus_{g\in {\mathbb S}}
g^*\gamma^*E$, where $\mathbb S$ is the subset \eqref{e21}. In fact, we have an isomorphism
\begin{equation}\label{e22}
\Phi\,\, :\,\, \varphi^*W_*\,\, \longrightarrow\,\, \bigoplus_{g\in {\mathbb S}} g^*\gamma^*E
\end{equation}
which is composition of the inclusion map $\varphi^*W_*\, \hookrightarrow\, \bigoplus_{g\in \Gamma} g^*\gamma^*E$
(see \eqref{e20}) with the natural projection
$$
\bigoplus_{g\in \Gamma} g^*\gamma^*E\,\, \longrightarrow\,\, \bigoplus_{g\in {\mathbb S}} g^*\gamma^*E
$$
defined by the inclusion map ${\mathbb S}\, \hookrightarrow\, \Gamma$. Let
$\varepsilon \,\in\,{\mathbb S}$ is the unique element that projects to $eG\,\in\, \Gamma/G$, where
$e\, \in\, G$ is the identity element, under the composition of maps in \eqref{e21}; so $\varepsilon\,=\,
G\bigcap {\mathbb S}$. Note that
$\varepsilon^* \gamma^*E$ is canonically identified with $\gamma^*E$ because $\varepsilon\,\in\, G\,=\, \text{Gal}(\gamma)$.
Since the vector bundle $\gamma^*E\,=\, \varepsilon^* \gamma^*E$ is isomorphic to a direct summand of $\bigoplus_{g\in
{\mathbb S}} g^*\gamma^*E$, using the isomorphism $\Phi$ in \eqref{e22} we conclude that $\gamma^*E$ is isomorphic to a direct
summand of the holomorphic vector bundle $\varphi^*W_*$.

Recall that $\varphi^*W_*$ is the base change, to $\mathbb C$, of a vector bundle defined over $Y_0/\overline{\mathbb Q}$.
Since $\gamma^*E$ is isomorphic to a direct summand of the holomorphic vector bundle $\varphi^*W_*$, from Lemma
\ref{lem-f} (this lemma is proved below) it follows that $\gamma^*E$ is isomorphic to the base change, to $\mathbb C$, of a vector bundle 
${\mathcal V}$ on $Y_0$. Fix an isomorphism
$$
\Psi\,\, :\,\, \gamma^*E\,\, \longrightarrow\,\,{\mathcal V}\otimes_{\overline{\mathbb Q}}{\mathbb C}.
$$
Consider the corresponding isomorphism
\begin{equation}\label{z3}
\gamma_*\Psi\,\, :\,\, \gamma_*\gamma^*E\,\, \longrightarrow\,\,\gamma_*({\mathcal V}\otimes_{\overline{\mathbb Q}}{\mathbb C})
\,\,=\,\, ((\gamma_0)_*{\mathcal V})\otimes_{\overline{\mathbb Q}}{\mathbb C},
\end{equation}
where $\gamma_0$ is the map in \eqref{e17}. By the projection formula,
\begin{equation}\label{z4}
\gamma_*\gamma^*E\,\,=\,\, E\otimes \gamma_*{\mathcal O}_Y.
\end{equation}
Consider the subbundle ${\mathcal O}_X\, \subset\, \gamma_*{\mathcal O}_Y$. It is a direct summand of $\gamma_*
{\mathcal O}_Y$, meaning there is a subbundle ${\mathcal K}\,\subset\, \gamma_*{\mathcal O}_Y$ such that $\gamma_*
{\mathcal O}_Y \,=\, {\mathcal O}_X\oplus {\mathcal K}$. Consequently, from \eqref{z4} it follows that $E$ is a direct
summand of $\gamma_*\gamma^*E$. So using the isomorphism $\gamma_*\Psi$ in \eqref{z3} we conclude that
$E$ is a direct summand of $((\gamma_0)_*{\mathcal V})\otimes_{\overline{\mathbb Q}}{\mathbb C}$. Now using Lemma \ref{lem-f}
(this lemma is proved below)
it follows immediately that $E$ is isomorphic to the base change, to $\mathbb C$, of a vector bundle on $X_0/\overline{\mathbb Q}$
(see \eqref{e1a}).
\end{proof}

\subsection{Indecomposability and base change}

Let $M_0$ be an irreducible smooth projective curve defined over $\overline{\mathbb Q}$ and $E_0$ a
vector bundle on $M_0$. Consider the corresponding algebraic vector bundle $E\,=\, E_0\otimes_{\overline{\mathbb Q}}
{\mathbb C}$ over the complex projective curve 
$$
M\,\,:=\,\, M_0\times_{{\rm Spec}\,\overline{\mathbb Q}} {\rm Spec}\,\mathbb{C}.
$$

\begin{lemma}\label{lem-f}
Let $V\, \subset\, E$ be a complex algebraic subbundle of positive rank satisfying the condition that
there is another algebraic subbundle $F\, \subset\, E$ such that the natural homomorphism
\begin{equation}\label{z1}
V\oplus F\,\,\longrightarrow\,\, E
\end{equation}
is an isomorphism. Then there is a vector bundle $V_0$ on $M_0$ such that $V$ is isomorphic to the base change
$V_0\otimes_{\overline{\mathbb Q}}{\mathbb C}$ of $V_0$ to $\mathbb C$.
\end{lemma}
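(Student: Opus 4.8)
The plan is to reduce the statement to the Krull--Schmidt theorem for vector bundles, using the fact that $\overline{\mathbb{Q}}$ is algebraically closed to carry indecomposability across the base change. I first recall that on a projective scheme over a field every $\mathrm{Hom}$ space between coherent sheaves is finite dimensional, so the endomorphism algebra of any vector bundle is a finite dimensional algebra; consequently the categories of vector bundles on $M_0$ and on $M$ are Krull--Schmidt categories (by a theorem of Atiyah), in which every bundle has an essentially unique decomposition into indecomposables, a bundle being indecomposable exactly when its endomorphism algebra is local. I would therefore start from a decomposition over $\overline{\mathbb{Q}}$,
\[
E_0\,\,\cong\,\, \bigoplus_{i=1}^{s} E_{0,i},
\]
with each $E_{0,i}$ an indecomposable vector bundle on $M_0$ (here repeated isomorphism types are allowed among the $E_{0,i}$).

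The crucial step, which I expect to be the main obstacle, is to show that each base change $E_{0,i}\otimes_{\overline{\mathbb{Q}}}\mathbb{C}$ stays indecomposable on $M$; this is precisely where algebraic closedness of $\overline{\mathbb{Q}}$ is used. Since $E_{0,i}$ is indecomposable, $A_i:=\mathrm{End}(E_{0,i})$ is a local finite dimensional $\overline{\mathbb{Q}}$-algebra, and because $\overline{\mathbb{Q}}$ is algebraically closed its residue division algebra is $\overline{\mathbb{Q}}$ itself, so $A_i=\overline{\mathbb{Q}}\cdot\mathrm{id}\oplus\mathfrak{m}_i$ with $\mathfrak{m}_i=\mathrm{rad}(A_i)$ nilpotent. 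Flat base change along the field extension $\overline{\mathbb{Q}}\hookrightarrow\mathbb{C}$ (valid since $M_0$ is proper and $\mathcal{H}om$ of vector bundles commutes with flat pullback) gives $\mathrm{End}(E_{0,i}\otimes_{\overline{\mathbb{Q}}}\mathbb{C})=A_i\otimes_{\overline{\mathbb{Q}}}\mathbb{C}$. In this algebra $\mathfrak{m}_i\otimes_{\overline{\mathbb{Q}}}\mathbb{C}$ is a nilpotent two-sided ideal with quotient $\mathbb{C}$, hence is the Jacobson radical, so the algebra is local and $E_{0,i}\otimes_{\overline{\mathbb{Q}}}\mathbb{C}$ is indecomposable. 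This is exactly what guarantees that the $\mathbb{C}$-indecomposable summands of $E$ all descend to $\overline{\mathbb{Q}}$.

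With this in hand,
\[
E\,\,=\,\, E_0\otimes_{\overline{\mathbb{Q}}}\mathbb{C}\,\,\cong\,\, \bigoplus_{i=1}^{s}\bigl(E_{0,i}\otimes_{\overline{\mathbb{Q}}}\mathbb{C}\bigr)
\]
is a decomposition of $E$ into indecomposable bundles on $M$. Since $V$ is a direct summand of $E$ by the splitting \eqref{z1}, the uniqueness part of Krull--Schmidt over $\mathbb{C}$ forces $V$ to be isomorphic to a direct sum of a sub-collection (with multiplicities) of these indecomposables, say $V\cong\bigoplus_{i\in I}\bigl(E_{0,i}\otimes_{\overline{\mathbb{Q}}}\mathbb{C}\bigr)$ for a suitable index multiset $I\subseteq\{1,\dots,s\}$. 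Setting $V_0:=\bigoplus_{i\in I}E_{0,i}$, a vector bundle on $M_0$, one gets $V_0\otimes_{\overline{\mathbb{Q}}}\mathbb{C}\cong V$, which is the assertion. The only remaining verifications are the finiteness of $\mathrm{Hom}$ spaces and the compatibility of $\mathcal{H}om$ and $H^0$ with the base change, both standard for vector bundles on a proper scheme over a field.
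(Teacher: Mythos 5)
Your proof is correct and follows essentially the same route as the paper: decompose $E_0$ into indecomposables over $\overline{\mathbb Q}$, show that indecomposability survives base change to $\mathbb C$ because the endomorphism algebra is scalars plus a nilpotent ideal (this is exactly where algebraic closedness of $\overline{\mathbb Q}$ enters), and then invoke Atiyah's Krull--Schmidt theorem to identify the direct summand $V$ with a sum of base-changed indecomposables. The only cosmetic difference is that the paper expresses the indecomposability criterion via Atiyah's result that the kernel of the trace map on $H^0(\mathrm{End})$ is nilpotent, whereas you express it via locality of the endomorphism ring and its Jacobson radical --- equivalent formulations of the same fact.
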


\begin{proof}
Let $W_i\, \longrightarrow\, M_0$ be indecomposable vector bundles such that
\begin{equation}\label{z2}
E_0\,\,=\,\, \bigoplus_{i=1}^r W_i.
\end{equation}
For any $1\, \leq\, i\, \leq\, r$, let
$$
{\mathcal W}_i\,\,:=\,\, W_i\otimes_{\overline{\mathbb Q}}{\mathbb C} \,\, \longrightarrow\,\, M
$$
be the base change, to $\mathbb C$, of $W_i$. We will show that each ${\mathcal W}_i$ is also indecomposable. For this consider the
homomorphism
$$
\Phi_0\,:\, H^0(M_0,\, \text{End}(W_i))\, \longrightarrow\, H^0(M_0,\, {\mathcal O}_{M_0})\,=\, \overline{\mathbb Q},\ \ \, A
\, \longmapsto\, {\rm trace}(A).
$$
Since $W_i$ is indecomposable, $\text{kernel}(\Phi_0)$ is a nilpotent algebra \cite[p.~201, Proposition 16]{At2} (while
this proposition is stated to $\mathbb C$, its proof is valid for $\overline{\mathbb Q}$). Since
$$
H^0(M,\, \text{End}({\mathcal W}_i))\,\,=\,\, H^0(M_0,\, \text{End}(W_i))\otimes_{\overline{\mathbb Q}}{\mathbb C},
$$
it follows that the kernel of the homomorphism
$$
\Phi\,:\, H^0(M,\, \text{End}({\mathcal W}_i))\, \longrightarrow\, H^0(M,\, {\mathcal O}_{M})\,=\, {\mathbb C},\ \ \, A
\, \longmapsto\, {\rm trace}(A)
$$
coincides with $\text{kernel}(\Phi_0)\otimes_{\overline{\mathbb Q}}{\mathbb C}$. As $\text{kernel}(\Phi_0)$ is a nilpotent
algebra, we conclude that
$$
\text{kernel}(\Phi)\,\,=\,\, \text{kernel}(\Phi_0)\otimes_{\overline{\mathbb Q}}{\mathbb C}
$$
is also a nilpotent algebra. This implies that ${\mathcal W}_i$ is indecomposable (see 
\cite[p.~201, Proposition 16]{At2}).

Therefore, the decomposition 
\begin{equation}\label{b1}
E\,\,=\,\, \bigoplus_{i=1}^r {\mathcal W}_i.
\end{equation}
given by base change, to $\mathbb C$, of the decomposition in \eqref{z2} is a decomposition of $E$ into a
direct sum of indecomposable vector bundles.

It may be mentioned that choosing a decomposition of $E_0$ (respectively, $E$) into a direct sum of
indecomposable vector bundles is equivalent to choosing a maximal torus in the group $\text{Aut}(E_0)$
(respectively, $\text{Aut}(E)$) defined by all automorphisms of $E_0$ (respectively, $E$). Note that
$\text{Aut}(E_0)$ (respectively, $\text{Aut}(E)$) is a nonempty Zariski open subset of the affine space
$H^0(M_0,\, \text{End}(E_0))$ (respectively, $H^0(M,\, \text{End}(E))$). Since $\text{Aut}(E)$ is the
base change of $\text{Aut}(E_0)$ to $\mathbb C$, the base change of a maximal torus of
$\text{Aut}(E_0)$ to $\mathbb C$ is a maximal torus of $\text{Aut}(E)$. Therefore, \eqref{b1}
is a decomposition of $E$ into a direct sum of indecomposable vector bundles.

The given condition that the homomorphism
in \eqref{z1} is an isomorphism implies that $V$ is isomorphic to the direct sum of some ${\mathcal W}_i$, in other words,
after reordering the indices $\{1,\, \cdots,\, r\}$,
$$
V\,\,=\,\, \bigoplus_{i=1}^s {\mathcal W}_i
$$
for some $1\, \leq\, s\, \leq\, r$ \cite[p.~315, Theorem 3]{At1}. So we have
$$
V\,\,=\,\, \bigoplus_{i=1}^s (W_i\otimes_{\overline{\mathbb Q}}{\mathbb C})\,\,=\,\,
\left(\bigoplus_{i=1}^s W_i\right)\otimes_{\overline{\mathbb Q}}{\mathbb C}.
$$
This completes the proof.
\end{proof}

\begin{remark}\label{rem1}
The key point in Lemma \ref{lem-f} is that $\overline{\mathbb Q}$ is an algebraically closed subfield
of $\mathbb C$. For example, the lemma is not valid if $\overline{\mathbb Q}$ is replaced by $\mathbb R$.
To give an example, take $M_0$ to be the anisotropic conic in ${\mathbb P}^2_{\mathbb R}$ defined by the
equation $X^2+Y^2+Z^2\,=\, 0$. Let $E_0$ be the unique nontrivial extension of $TM_0$ by
${\mathcal O}_{M_0}$. Then $E\,=\, E_0\otimes_{\mathbb R} {\mathbb C}$ on 
${\mathbb C}{\mathbb P}^1\,= \, M_0\times_{{\rm Spec}\, {\mathbb R}} {\rm Spec}\,\mathbb{C}$ decomposes
as ${\mathcal O}_{{\mathbb C}{\mathbb P}^1}(1)\oplus {\mathcal O}_{{\mathbb C}{\mathbb P}^1}(1)$.
But ${\mathcal O}_{{\mathbb C}{\mathbb P}^1}(1)$ is not isomorphic to the base change of any line bundle 
on $M_0$.
\end{remark}

\section*{Acknowledgements}

The second author would like to thank Max Plank Institute, Bonn for a visiting position in May-June 2023, where
he learnt about this question from Motohico Mulase. The authors thank him for posing the question.
The first author is partially supported by a J. C. Bose Fellowship (JBR/2023/000003).

%%%%%%%%%%%%%%%%%%%%%%%%%%%%%%%%%%%%%%%%%%%%%%%%%%%%%%%%%%%%%%%%%%%%%%%%%%%%%%%%%%%%%%%%%%%%%%%%%%%%%

\end{document}